\DeclareMathAlphabet{\mathpzc}{OT1}{pzc}{m}{it}
\def\BZ{\mathbb{Z}}
\def\Add{\operatorname{Add}}
\def\adots{\mathinner{\mkern1mu\raise1.0pt\vbox{\kern7.0pt\hbox{.}}\mkern2mu\raise4.0pt\hbox{.}\mkern2mu\raise7.0pt\hbox{.}\mkern1mu}}
\def\C{\operatorname{C}}
\def\H{\operatorname{H}}
\def\Hom{\operatorname{Hom}}
\def\HProj{\mbox{\rm $H$-Proj}}
\def\HProjsmall{\mbox{\tiny $H$-Proj}}
\def\Ind{\operatorname{Ind}}
\def\K{\operatorname{K}}
\def\Tate{\operatorname{Tate}}
\def\Mod{\operatorname{Mod}}
\def\Res{\operatorname{Res}}
\def\StMod{\operatorname{StMod}}
\def\Z{\operatorname{Z}}
\newtheorem{Lemma}{Lemma}[section]
\newtheorem{Theorem}[Lemma]{Theorem}
\newtheorem{Proposition}[Lemma]{Proposition}
\newtheorem{Corollary}[Lemma]{Corollary}
\theoremstyle{definition}
\newtheorem{Definition}[Lemma]{Definition}
\newtheorem{Remark}[Lemma]{Remark}
\begin{document}

\setlength{\parindent}{0pt}
\setlength{\parskip}{7pt}

\title[Relative compact generation]{Some relative stable categories
  are compactly generated}

\author{Matthew Grime}

\author{Peter J\o rgensen}
\address{J\o rgensen: School of Mathematics and Statistics,
Newcastle University, Newcastle upon Tyne NE1 7RU,
United Kingdom}
\email{peter.jorgensen@ncl.ac.uk}
\urladdr{http://www.staff.ncl.ac.uk/peter.jorgensen}



\keywords{Finite representation type, homotopy category, relative
  homological algebra, representation theory of groups, Sylow
  $p$-subgroups, triangulated category}

\subjclass[2000]{20C05, 20C20, 20J05}

\begin{abstract} 
  
  Let $G$ be a finite group.  The stable module category of $G$ has
  been applied extensively in group representation theory.  In
  particular, it has been used to great effect that it is a
  triangulated category which is compactly generated.
  
  Let $H$ be a subgroup of $G$.  It is possible to define a stable
  mo\-du\-le category of $G$ relative to $H$.  It too is a triangulated
  category, but no non-trivial examples have been known where this
  relative stable category was compactly generated.
  
  We show here that the relative stable category is compactly
  generated if the group algebra of $H$ has finite representation
  type.  In characteristic $p$, this is equivalent to the Sylow
  $p$-subgroups of $H$ being cyclic.
 
\end{abstract}

\maketitle


The study of localizations of triangulated categories has a rich and
varied heritage arising from the work of Adams, Bousfield, Brown,
Thomason, and others. Neeman further developed these theories, and
showed that to bring the full power of such arguments to bear one
needs a compactly generated category.

Localization techniques were brought to the attention of the
representation theory world in Rickard's \cite{R}, where they were
applied to the stable module category which is easily shown to be
compactly ge\-ne\-ra\-ted.

The stable module category is not the only triangulated quotient of
the module category that one meets in representation theory. In
\cite{CPW} Carlson, Peng, and Wheeler note that one can adapt
Rickard's work to relative stable module categories. However, not much
is known about the structure of these categories. In particular no
non-trivial examples have been given which are known to be compactly
generated.

In this note we prove the following theorem on the relative stable
mo\-du\-le category $\StMod_H(kG)$:

\label{pag:1}
{\bf Theorem.}  {\em Let $k$ be an algebraic closure of $\BZ/p$.  Let
  $G$ be a finite group, $H$ a subgroup of $G$.  If $kH$ has finite
  representation type, then $\StMod_H(kG)$ is compactly generated.}

We will keep the assumptions on $k$, $G$, and $H$ for the rest of the
paper.  All modules will be left-modules.  Recall that $kH$ has finite
representation type precisely if the Sylow $p$-subgroups of $H$ are
cyclic, see \cite[thm.\ VI.3.3]{ARS}.  Let us remind the reader of the
construction of $\StMod_H(kG)$.

We will denote the class of $H$-projective $kG$-modules by $\HProj$;
this is the class of all summands of modules induced up from $kH$. It
is an {\em additive} subcategory of $\Mod(kG)$. We use $\HProj$ to
define the triangulated categories $\StMod_H(kG)$ and $\K(\HProj)$.

The relative stable module category $\StMod_H(kG)$ is $\Mod(kG)$ modulo
the morphisms that factor through objects of $\HProj$. The category
$\K(\HProj)$ is the homotopy category of complexes of objects of
$\HProj$.

By $\Tate_H(kG)$ we denote the collection of complexes $Q$ of
$\HProj$-modules for which the restriction $\Res_H^G(Q)$ to $H$ is
split exact.  We may think of $\Tate_H(kG)$ either as a triangulated
subcategory of $\K(\HProj)$, or as a full subcategory of $\C(\HProj)$,
the category of complexes of objects of $\HProj$ and chain maps.

\begin{Remark}
\label{rmk:Tate}
If $X$ is in $\Tate_H(kG)$ then $X$ is exact and splits into
short exact sequences
\[
  0 \rightarrow \Z^n(X) \rightarrow X^n \rightarrow \Z^{n+1}(X) \rightarrow 0
\]
which become split exact upon restriction to $H$.  In particular, it
is easy to show that $\Z^n(X) \rightarrow X^n$ is an
$\HProj$-preenvelope and $X^n \rightarrow \Z^{n+1}(X)$ is an
$\HProj$-precover.
\end{Remark}

Several variants of the following result are well known, see for
instance \cite[thm.\ 2.3]{BG} and \cite[thm.\ 9.6.4]{HPS}.

\begin{Proposition}
\label{pro:equiv}
View $\Tate_H(kG)$ as a full subcategory of $\K(\HProj)$.
There is a triangulated equivalence of categories
\[
  \Tate_H(kG) \simeq \StMod_H(kG) 
\]
given by $X \mapsto \Z^0(X)$.
\end{Proposition}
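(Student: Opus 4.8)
The plan is to recognise this as the relative analogue of the classical identification of the stable category of a Frobenius category with its homotopy category of complete (Tate) resolutions, as in the cited \cite{BG} and \cite{HPS}. First I would equip $\Mod(kG)$ with the exact structure whose admissible short exact sequences are the $H$-split ones, that is, those $0 \to A \to B \to C \to 0$ which split after applying $\Res_H^G$. For this exact structure the projective-injective objects are precisely $\HProj$: the counit $\Ind_H^G \Res_H^G(M) \to M$ is an $H$-split epimorphism with source in $\HProj$, and the unit $M \to \operatorname{Coind}_H^G \Res_H^G(M)$ is an $H$-split monomorphism; since $H$ has finite index, induction and coinduction agree, so $\HProj$ supplies both enough relative projectives and enough relative injectives, and $\Mod(kG)$ becomes a Frobenius category whose stable category is exactly $\StMod_H(kG)$. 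The objects of $\Tate_H(kG)$ are then the totally acyclic complexes for this structure.

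Next I would check that $X \mapsto \Z^0(X)$ is a well-defined functor $\Tate_H(kG) \to \StMod_H(kG)$. A chain map $f\colon X \to Y$ restricts to a $kG$-homomorphism $\Z^0(X) \to \Z^0(Y)$, and this assignment is additive and functorial. If $f$ is null-homotopic via $s$ with components $s^n\colon X^n \to Y^{n-1}$, then on a cycle $x \in \Z^0(X)$ one has $f(x) = d_Y^{-1} s^0(x)$ because $d_X^0(x)=0$; hence the induced map on $\Z^0$ factors through $Y^{-1} \in \HProj$ and so is zero in $\StMod_H(kG)$. Thus $\Z^0$ descends to the quotient.

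For essential surjectivity I would, given a $kG$-module $M$, splice a relative projective resolution $\cdots \to P_1 \to P_0 \to M \to 0$ and a relative injective coresolution $0 \to M \to I^0 \to I^1 \to \cdots$, both $H$-split and with all terms in $\HProj$, into a single complex $X$ with $X^{-1-n}=P_n$ for $n \ge 0$, with $X^n = I^n$ for $n\ge 0$, and with differential $P_0 \to I^0$ equal to $P_0 \twoheadrightarrow M \hookrightarrow I^0$. By construction $\Res_H^G(X)$ is split exact, so $X \in \Tate_H(kG)$, and $\Z^0(X) = \Ker(I^0 \to I^1) \cong M$. The triangulated structure is then matched by observing that the shift on $\Tate_H(kG)$ corresponds under $\Z^0$ to the relative cosyzygy: the $H$-split sequence $0 \to \Z^0(X) \to X^0 \to \Z^1(X) \to 0$ of Remark~\ref{rmk:Tate} exhibits $\Z^0(X[1]) = \Z^1(X) \cong \Omega^{-1}\Z^0(X)$, and distinguished triangles in $\K(\HProj)$ coming from mapping cones are carried to the standard triangles of $\StMod_H(kG)$ coming from $H$-split short exact sequences.

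The main work, and the expected obstacle, is full faithfulness, where I would run the relative comparison theorem in both directions. For fullness, a morphism in $\StMod_H(kG)$ between $\Z^0(X)$ and $\Z^0(Y)$ is represented by a genuine $kG$-map $\varphi\colon \Z^0(X) \to \Z^0(Y)$, which I would extend to a chain map $X \to Y$ by lifting degree by degree, using that $\Z^n(X) \to X^n$ is an $\HProj$-preenvelope and $X^n \to \Z^{n+1}(X)$ an $\HProj$-precover (Remark~\ref{rmk:Tate}) to solve the required extension and lifting problems against the $H$-split exact complex $Y$. For faithfulness, if a chain map $f$ induces a map on $\Z^0$ that factors through some object of $\HProj$, I would build a contracting homotopy for $f$ by the same degree-by-degree procedure, again feeding on the pre-envelope and pre-cover properties to produce the homotopy components $s^n$. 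The point requiring care is that, unlike in the absolute projective/injective case, the lifts are guaranteed only by the \emph{relative} (pre-)cover and (pre-)envelope properties, so at each stage one must verify that the relevant square is $H$-split before invoking them; granting this, the two comparison arguments show $\Z^0$ is bijective on Hom-sets, completing the proof.
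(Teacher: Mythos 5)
Your proposal is correct and follows essentially the same route as the paper's proof: faithfulness and fullness are obtained by degree-by-degree lifting arguments using the $\HProj$-preenvelope/precover properties of Remark~\ref{rmk:Tate}, and essential surjectivity by splicing an $H$-split left $\HProj$-resolution with an $H$-split right $\HProj$-coresolution of a given module. Your extra material --- the Frobenius exact structure on $\Mod(kG)$ with $H$-split sequences, and the explicit matching of shifts and triangles --- is sound background that the paper leaves implicit (citing \cite{CPW} for the triangulated structure on $\StMod_H(kG)$).
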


\begin{proof}
We can clearly view $\Z^0$ as a functor $\C(\HProj) \rightarrow
\Mod(kG)$.  Viewing $\Tate_H(kG)$ as a full subcategory of
$\C(\HProj)$, we hence have a functor
\begin{equation}
\label{equ:a}
  \Z^0 : \Tate_H(kG) \rightarrow \Mod(kG).
\end{equation}

Let $X \rightarrow Y$ be a chain map of complexes from $\Tate_H(kG)$.
There is an induced homomorphism $\Z^0(X) \rightarrow \Z^0(Y)$.  The
chain map is null homotopic if and only if the induced homomorphism
factors through a module from $\HProj$, that is, if and only if the
induced homomorphism becomes $0$ in $\StMod_H(kG)$.  This holds by a
lifting argument using Remark \ref{rmk:Tate}; cf.\ \cite[proof of
lem.\ 2.2]{BG}.

Hence the functor from equation \eqref{equ:a} induces a faithful
functor
\begin{equation}
\label{equ:b}
  \Z^0 : \Tate_H(kG) \rightarrow \StMod_H(kG)
\end{equation}
where $\Tate_H(kG)$ is now viewed as a full subcategory of
$\K(\HProj)$.

Observe that $X$ can be viewed as a relative Tate resolution of
$\Z^0(X)$.  Hence the functor from \eqref{equ:b} is also full, since
any homomorphism of modules can be lifted to the relative Tate
resolutions; this is again a lifting argument using Remark
\ref{rmk:Tate}.

To conclude that the functor from \eqref{equ:b} is an equivalence of
categories, all that is needed is to see that it is essentially
surjective.  But each $kG$-module $m$ has a relative Tate resolution
$X$, so indeed, $m \cong \Z^0(X)$ for some $X$.  Note that we can
construct such an $X$ by splicing a left-$\HProj$-resolution and a
right-$\HProj$-resolution of $m$.  These resolutions become split
exact upon restriction to $H$ because this is true for
$\HProj$-precovers and -preenvelopes.
\end{proof}

\begin{Definition}
If $kH$ has finite representation type, then $y$ will be the
direct sum of its indecomposable finitely generated modules, and
$x = \Ind_H^G(y)$ the induced module over $kG$.  
\end{Definition}

\begin{Remark}
\label{rmk:x}
In the case of the definition, note that $\HProj = \Add(x)$.  Note
also that $x$ can be viewed as a complex concentrated in degree zero.
As such, it is in $\K(\HProj)$.
\end{Remark}

\begin{Lemma}
\label{lem:perp}
We have
{\rm 
\begin{align*}
  \lefteqn{\Tate_H(kG) = x^{\perp}} & \\
  & \;\;\;
    = \{\, Q \in \K(\HProj) \,|\, 
           \Hom_{\K(\HProjsmall)}(\Sigma^n x,Q) = 0 \mbox{ for each } n \,\}
\end{align*}
}
\!\!in $\K(\HProj)$.
\end{Lemma}

\begin{proof}
Let $Q$ be in $\K(\HProj)$.  Then
\begin{align*}
  \Hom_{\K(\HProjsmall)}(\Sigma^n x,Q)
  & = \Hom_{\K(kG)}(\Sigma^n \Ind_H^G(y),Q) \\
  & \cong \Hom_{\K(kH)}(\Sigma^n y,\Res_H^G(Q)) \\
  & = (*)
\end{align*}
by adjointness, since $\Ind_H^G(y) = kG \otimes_{kH} y$ while
$\Res_H^G$ restricts $kG$-modules to $kH$-modules.  If $(*)$ is $0$
then so is
\[
  \Hom_{\K(kH)}(\Sigma^n m,\Res_H^G(Q))
\]
for each $m$ in $\Mod(kH)$, since $\Mod(kH)$ equals $\Add(y)$ by
\cite[cor.\ 4.8]{A} because $kH$ has finite representation type.  But
if this expression is $0$ for each $m$ and each $n$, then
$\Res_H^G(Q)$ is null homotopic by an easy argument; that is, $Q$ is
in $\Tate_H(kG)$.
\end{proof}

\begin{Proposition}
\label{pro:comp}
If $kH$ has finite representation type, then $\K(\HProj)$ is
compactly generated.
\end{Proposition}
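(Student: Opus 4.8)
The plan is to transport the question to the homotopy category of projective modules over a finite-dimensional algebra, where compact generation is known. The link is Remark~\ref{rmk:x}: the finite representation type of $kH$ is used precisely to collapse $\HProj$ to $\Add(x)$ for a \emph{single} finitely generated module $x$, and it is this that makes the reduction below possible. (If $kH$ had infinite representation type, $\HProj$ would be the additive closure of infinitely many indecomposables and no such finitely generated $x$ would be available.)

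First I would put $E=\End_{kG}(x)$ and study the functor $\Hom_{kG}(x,-)$. Since $x$ is finitely generated over the finite-dimensional algebra $kG$, this functor commutes with coproducts and restricts to an additive equivalence $\Add(x)\simeq\Proj(E)$: it is full and faithful on $\Add(x)$, carries $x$ to the free module $E$, and, as its essential image is closed under coproducts and summands, it hits every projective $E$-module. Applying it degreewise yields a triangulated, coproduct-preserving equivalence $\K(\HProj)\simeq\K(\Proj E)$. Note that $E$ is itself finite-dimensional, being the endomorphism ring of a finitely generated module over $kG$.

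It then suffices to show that $\K(\Proj E)$ is compactly generated for a finite-dimensional algebra $E$. Coproducts exist and are formed degreewise, and every bounded complex of finitely generated projectives is compact; in particular the projective stalk complexes are compact. For the generation I would invoke the theorem that $\K(\Proj E)$ is compactly generated whenever $E$ is left noetherian, which covers the present case.

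The real obstacle is this last input. The projective stalks detect only cohomology, so their common right-orthogonal is the class of acyclic complexes of projectives; when $\gldim E=\infty$ -- the generic situation, already for $H=1$ where $E\cong kG$ -- these need not be contractible, and one must adjoin further compact generators (the complete resolutions of the finitely generated $E$-modules) in order to kill them. Producing these generators is the substance of the cited result. I would stress that $E$ need not be self-injective: for $H=G$ it is the Auslander algebra of $kG$, of global dimension at most $2$. Hence the projective-module version of the theorem is genuinely required, and the easier injective analogue does not suffice.
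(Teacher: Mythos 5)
Your proposal is correct, but it takes a genuinely different route from the paper's. The paper never leaves the group algebra: it observes that $k$, being an algebraic closure of $\BZ/p$, is countable, so $kG$ has pure global dimension $\leq 1$ by \cite[thm.\ 11.21]{JL}, and then applies the Holm--J{\o}rgensen theorem \cite[sec.\ 4, (1)]{HJ} (coherent ring, finitely presented module, pure global dimension $\leq 1$) directly to conclude that $\K(\Add x)=\K(\HProj)$ is compactly generated. You instead projectivize: $\Hom_{kG}(x,-)$ restricts to an additive equivalence $\Add(x)\simeq \Proj(E)$ with $E=\End_{kG}(x)$ (up to replacing $E$ by its opposite, a matter of convention), hence a triangulated equivalence $\K(\HProj)\simeq\K(\Proj E)$, and you then quote compact generation of the homotopy category of projectives over the finite-dimensional algebra $E$. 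Both reductions are sound; your full-faithfulness claim on all of $\Add(x)$, with infinite coproducts allowed, does require the smallness of $x$, which you correctly invoke. The interesting difference is in what the external input costs. The paper's input forces countability of $k$ --- this is precisely why the main theorem is stated for $k=\overline{\BZ/p}$ rather than for an arbitrary field of characteristic $p$ --- whereas your route is insensitive to the cardinality of $k$, so together with the rest of the paper it proves the stronger statement over any field of characteristic $p$. One caveat: the theorem you invoke is not usually stated under the hypothesis ``left noetherian''; the versions in the literature assume right coherence (Neeman, Invent.\ Math.\ 174 (2008)) or coherence together with all flat modules having finite projective dimension (J{\o}rgensen, Adv.\ Math.\ 193 (2005)) --- but a finite-dimensional algebra is two-sided noetherian and perfect, so every version applies to $E$, and your argument stands.

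Your closing assertion, however, is wrong: the ``easier injective analogue'' \emph{does} suffice. For a finite-dimensional algebra $E$ the Nakayama functor $\Hom_k(E,k)\otimes_E -$ is a covariant equivalence $\Proj(E)\to\Inj(E)$ on the big module categories (every injective $E$-module is a direct sum of summands of $\Hom_k(E,k)$, and the functor is fully faithful on projectives and preserves coproducts), so $\K(\Proj E)\simeq\K(\Inj E)$, and Krause's theorem that $\K(\Inj)$ is compactly generated over a noetherian ring would serve equally well; self-injectivity of $E$ is not needed for this. This error does not affect the validity of your proof --- it only means you have more admissible citations, not fewer.
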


\begin{proof}
Since $k$ is countable, $kG$ has pure global dimension $\leq 1$ by
\cite[thm.\ 11.21]{JL}.  The finite dimensional algebra $kG$ is
certainly coherent, and $x$ is a finitely generated $kG$-module.

By \cite[sec.\ 4, (1)]{HJ}, the category $\K(\Add x)$ is compactly
generated.  But this category is $\K(\HProj)$ by Remark \ref{rmk:x}.
\end{proof}

\begin{Corollary}
\label{cor:comp}
  If $kH$ has finite representation type, then $\Tate_H(kG)$ is
  compactly generated.
\end{Corollary}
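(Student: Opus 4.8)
The plan is to realize $\Tate_H(kG)$ as the right orthogonal of a single compact object inside the compactly generated category $\K(\HProj)$, and then to let the localization theory for compactly generated triangulated categories do the rest. The two inputs are already at hand: by Proposition \ref{pro:comp} the category $\K(\HProj)$ is compactly generated, and by Lemma \ref{lem:perp} we have $\Tate_H(kG) = x^{\perp}$, the right orthogonal to the set $\{\Sigma^n x\}_{n \in \BZ}$.

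First I would verify that $x$, viewed as a stalk complex concentrated in degree zero as in Remark \ref{rmk:x}, is a \emph{compact} object of $\K(\HProj)$. Coproducts in $\K(\HProj)$ are formed degreewise, and $\HProj = \Add(x)$ is closed under arbitrary coproducts. Since $x$ is a finitely generated, hence finitely presented, module over the Noetherian algebra $kG$, the functor $\Hom_{kG}(x,-)$ commutes with coproducts; passing to the homotopy category, $\Hom_{\K(\HProjsmall)}(x,-) \cong \H^0 \Hom_{kG}(x,-)$ inherits this property, so $x$ is compact.

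Next I would pass from the single object to the localizing subcategory $\langle x \rangle$ it generates in $\K(\HProj)$. The point is that orthogonality to $x$ already cuts out $\langle x \rangle^{\perp}$: for fixed $Q$, the class $\{\, S \mid \Hom_{\K(\HProjsmall)}(\Sigma^n S, Q) = 0 \text{ for all } n \,\}$ is closed under shifts, triangles, and coproducts, hence is localizing, so containing $x$ forces it to contain $\langle x \rangle$. Therefore $x^{\perp} = \langle x \rangle^{\perp}$, and by Lemma \ref{lem:perp} this is exactly $\Tate_H(kG)$.

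Finally I would invoke Neeman's localization theorem. Since $\langle x \rangle$ is the localizing subcategory generated by a set of compact objects in the compactly generated category $\K(\HProj)$, the inclusion admits a Bousfield localization, the Verdier quotient $\K(\HProj)/\langle x \rangle$ is again compactly generated, and the canonical functor restricts to a triangulated equivalence $\langle x \rangle^{\perp} \xrightarrow{\sim} \K(\HProj)/\langle x \rangle$. Composing these identifications shows that $\Tate_H(kG) = \langle x \rangle^{\perp}$ is compactly generated. The only step genuinely requiring care is the compactness of $x$; once that is established, the conclusion is a formal consequence of the general machinery, so I expect no serious obstacle beyond correctly matching the hypotheses of the localization theorem.
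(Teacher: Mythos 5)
Your proof is correct and takes essentially the same route as the paper: both arguments feed Proposition \ref{pro:comp} and Lemma \ref{lem:perp} into the observation that $x$ is compact in $\K(\HProj)$ (via $\Hom_{\K(\HProjsmall)}(x,-) \simeq \H^0\Hom_{kG}(x,-)$ and finite generation of $x$), and then conclude that the right perpendicular category $x^{\perp} = \Tate_H(kG)$ of a compact object is compactly generated. The only difference is one of packaging: where the paper cites \cite[prop.\ 1.7(1)]{IK} for this last step, you unfold that citation into its proof --- identifying $x^{\perp}$ with $\langle x \rangle^{\perp} \simeq \K(\HProj)/\langle x \rangle$ and invoking Neeman's localization theorem --- which is precisely the argument behind the cited result, as the paper's own closing remark points out.
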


\begin{proof}
The category $\K(\HProj)$ is compactly generated by Proposition
\ref{pro:comp}, and $\Tate_H(kG) = x^{\perp}$ by Lemma
\ref{lem:perp}.

But $x$ is a compact object of $\K(\HProj)$, as follows for instance
from the formula
\[
  \Hom_{\K(\HProjsmall)}(x,-) \simeq \H^0 \Hom_{kG}(x,-)
\]
since $x$ is finitely generated over $kG$.

So $\Tate_H(kG)$ is the right perpendicular category of a compact
object, so it is compactly generated by \cite[prop.\ 1.7(1)]{IK}.
\end{proof}

Finally, the theorem from page \pageref{pag:1} follows.

\begin{Theorem}
\label{thm:comp}
  If $kH$ has finite representation type, then $\StMod_H(kG)$ is
  compactly generated.
\end{Theorem}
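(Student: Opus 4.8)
The plan is to obtain the Theorem as a formal consequence of the two results already established, using that compact generation is an invariant of triangulated equivalence. First I would record the general principle: if $F \colon \cT \to \cT'$ is an equivalence of triangulated categories, then $F$ sends compact objects to compact objects and sends a generating set of compacts to such a set. This is routine, since an equivalence (having a quasi-inverse) preserves all coproducts that exist, preserves the shift, and respects both the vanishing condition $\Hom(\Sigma^n c, -) = 0$ that defines a generating set and the finiteness condition $\Hom\bigl(c, \coprod_i X_i\bigr) \cong \coprod_i \Hom(c, X_i)$ that defines compactness. Consequently $\cT$ is compactly generated precisely when $\cT'$ is.

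Granting this, I would simply combine Proposition \ref{pro:equiv}, which supplies the triangulated equivalence $\Tate_H(kG) \simeq \StMod_H(kG)$ given by $X \mapsto \Z^0(X)$, with Corollary \ref{cor:comp}, which asserts that $\Tate_H(kG)$ is compactly generated whenever $kH$ has finite representation type. Transporting compact generation across the equivalence then yields that $\StMod_H(kG)$ is compactly generated, which is exactly the assertion of the Theorem.

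I do not expect any genuine obstacle at this final step, because all of the substantive content lies upstream: the identification $\Tate_H(kG) = x^{\perp}$ in Lemma \ref{lem:perp}, the compact generation of the ambient homotopy category $\K(\HProj)$ in Proposition \ref{pro:comp}, and the right-perpendicular argument in Corollary \ref{cor:comp}. With those in place, the Theorem is immediate from the invariance of compact generation under triangulated equivalence, so this last step serves mainly to assemble the pieces and record the conclusion.
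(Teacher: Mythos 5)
Your proof is correct and is exactly the paper's argument: the paper also proves the Theorem by combining Proposition \ref{pro:equiv} with Corollary \ref{cor:comp}, with the invariance of compact generation under triangulated equivalence left implicit. Your explicit justification of that invariance is a harmless (and reasonable) elaboration of the same step.
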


\begin{proof}
Combine Proposition \ref{pro:equiv} with Corollary \ref{cor:comp}.
\end{proof}

\begin{Remark}
It is not clear that our methods can be used to compute a set of
compact generators of $\StMod_H(kG)$.

To do so, we would need to find a set of compact generators of the
category $\Tate_H(kG)$ and then use the equivalence $\Z^0$.  By
unravelling the proof of \cite[prop.\ 1.7(1)]{IK}, it can be seen that
the compact generators of $\Tate_H(kG)$ would come by taking a set of
compact generators of $\K(\HProj)$ and applying the left adjoint to
the inclusion of $\Tate_H(kG)$ into $\K(\HProj)$.  This left adjoint
is constructed by Neeman in \cite{N}, but the construction is infinite
and does not obviously lend itself to concrete computations.

It would be interesting to find a procedure whereby a set of compact
generators of $\StMod_H(kG)$ could be computed.
\end{Remark}

\bigskip
\noindent
{\bf Acknowledgements.}
The first author is (partly) supported by the Heilbronn Institute for
Mathematical Research.

\end{document}